\newtheorem{theorem}{Theorem}
\newcommand{\T}{\top}
\newcommand{\minew}[1]{#1}
\begin{document}

\title{Variable Splitting Methods for Constrained State Estimation in Partially Observed Markov Processes}

\author{Rui Gao,
        Filip Tronarp,
        and Simo~S\"arkk\"a,~\IEEEmembership{Senior Member,~IEEE}
\thanks{
R.~Gao and S.~S\"arkk\"a are with the Department of Electrical Engineering and Automation, Aalto University, Espoo, 02150 Finland (e-mail: \{rui.gao, simo.sarkka\}@aalto.fi). F.~Tronarp is with the Department of Computer Science, Tübingen University, Tübingen, 72076 Germany (e-mail: filip.tronarp@uni-tuebingen.de)
}}

\markboth{Journal of \LaTeX\ Class Files,~Vol.~14, No.~8, August~2015}
{Shell \MakeLowercase{\textit{et al.}}: Bare Demo of IEEEtran.cls for IEEE Journals}

\maketitle

\begin{abstract}

In this paper, we propose a class of efficient, accurate, and general methods for solving state-estimation problems with equality and inequality constraints. The methods are based on recent developments in variable splitting and partially observed Markov processes. We first present the generalized framework based on variable splitting, then develop efficient methods to solve the state-estimation subproblems arising in the framework. The solutions to these subproblems can be made efficient by leveraging the Markovian structure of the model as is classically done in so-called Bayesian filtering and smoothing methods. The numerical experiments demonstrate that our methods outperform conventional optimization methods in computation cost as well as the estimation performance.
\end{abstract}

\begin{IEEEkeywords}
	Constrained state estimation, inequality constraint, variable splitting, Kalman filtering and smoothing
\end{IEEEkeywords}

\IEEEpeerreviewmaketitle

\section{Introduction}
\label{sec:itroduction}
\IEEEPARstart{M}{any} real-world applications in signal processing, such as target tracking, indoor positioning, and robotics, can be formulated as state estimation tasks for restoring the hidden states given a set of incomplete observations~\cite{Bar-Shalom+Li+Kirubarajan:2001,simo2013Bayesian}. Mathematically, the estimation task is formulated as inference in a statistical model, where the state of the dynamic system evolves under constraints which arise naturally from physical properties or model assumptions. The aim of this paper is to present methods for models which differ from these classical formulations in the sense that they contain additional inequality or equality constraints \cite{Simon2002Kalman,Aravkin2017Generalized} on the solution itself. For example, the exploitation of the trajectory geometry in ship tracking has been proven to be effective in enhancing the accuracy of the trajectory estimate \cite{davidson2016survey,Bella2009KB}. Imposing such constraints leads to more accurate or physically reasonable estimates, but also makes solving the problem significantly more challenging. 

In recent decades, it has become common to formulate the methodology for state estimation in stochastic systems as special cases of Bayesian smoothers \cite{simo2013Bayesian}. However, it is well-known that some of these algorithms can be used to efficiently solve optimization problems -- even non-convex ones -- which arise from finding the maximum a posteriori (MAP) estimate or similar point-estimates of the system state. For instance, Kalman smoother (KS) is equivalent to the batch least-square solution \cite{Barfoot2017State}; iterated extended Kalman smoothers (IEKS) can be seen as Gauss--Newton methods for computing MAP estimates \cite{Bell1993ekf,Bell1994smoother}. One main advantage of these methods is that in large data regime, due to leveraging of the Markov properties of the systems, they are computationally efficient~\cite{Gao2019ieks}.

Variable splitting methods such as Peaceman--Rachford splitting (PRS) \cite{Peaceman1955}, split Bregman method (SBM)~\cite{Goldstein2009Split}, and alternating direction method of multipliers (ADMM)~\cite{Boyd2011admm,Gao2018admm} are efficient optimization methods, which can be applied to many kinds of constrained optimization problems. The main idea is to convert an inequality constrained problem into an equality constrained problem by variable splitting. Unfortunately, because of the nature of the problems, the direct solution via subgradient methods will remain computationally too expensive when the number of data points is large. However, it turns out that when applied to dynamic models, the computational demand can be lowered by using filtering and smoothing type of methods \cite{Gao2019mlsp,simo2020LMIEKS}.

In this paper, we focus on state estimation with nonlinear equality and/or inequality constraints while leveraging the Markovian structure of the model. The main contribution of the paper is to develop a novel class of constrained smoother-based variable splitting methods which can be instantiated by adopting different smoothers as well as variable splitting methods (i.e., ADMM, PRS, SBM), gaining the benefits of both. Their combination to solve constrained state-estimation problems leads to effective methods that have not previously been considered. For the special cases of Gaussian-driven systems, we present the constrained Kalman smoother and the constrained iterated extended Kalman smoother, which arise in the update steps of variable splitting. Our experiments demonstrate promising performance of the methods.

\subsection{Problem Formulation}
\label{sec:problem_formulation}
We consider general probabilistic state-space models, also called partially observed Markov processes (see \cite{simo2013Bayesian}):
\begin{equation}\label{eq:genmodel}
\begin{aligned}
\mathbf{x}_{t} \sim p(\mathbf{x}_{t} \mid \mathbf{x}_{t-1}), \quad
\mathbf{y}_{t} \sim p(\mathbf{y}_{t} \mid \mathbf{x}_t),
\end{aligned}
\end{equation}
where $\mathbf{x}_{t} \in \mathbb{R}^{N_x}$ denotes an $N_x$-dimensional state of the system, $\mathbf{y}_{t} \in \mathbb{R}^{N_y}$ is an $N_y$-dimensional noisy measurement at the time step $t=1,\ldots,T$, $p(\mathbf{x}_{t} \mid \mathbf{x}_{t-1})$ is the transition density of the Markovian state process, and $p(\mathbf{y}_{t} \mid \mathbf{x}_t)$ is the conditional probability density of the measurement. The prior distribution of the state is given as $p(\mathbf{x}_1)$. A particularly important special case is a model of the form
\begin{equation}\label{eq:gaussian_model}
\begin{aligned}
\mathbf{x}_{t} =\mathbf{a}_t(\mathbf{x}_{t-1})+ \mathbf{q}_t, \quad
\mathbf{y}_{t} = \mathbf{h}_t(\mathbf{x}_t)+ \mathbf{r}_t,
\end{aligned}
\end{equation}
where $\mathbf{h}_t: \mathbb{R}^{N_x} \to \mathbb{R}^{N_y}$ is a measurement function and $\mathbf{a}_t: \mathbb{R}^{N_x} \to \mathbb{R}^{N_x}$ is a state transition function. The initial state $\mathbf{x}_1$ is assumed to be Gaussian with mean $\mathbf{m}_1$ and covariance $\mathbf{P}_1$. The errors $\mathbf{q}_t$ and $\mathbf{r}_t$ are assumed to be mutually independent zero-mean Gaussian random variables with known positive definite covariance matrices $\mathbf{Q}_t$ and $\mathbf{R}_t$.

The goal is to estimate the state sequence $\mathbf{x}_{1:{T}} = \left\{\mathbf{x}_t\right\}_{t=1}^T$ from the noisy measurement sequence $\mathbf{y}_{1:{T}} = \{\mathbf{y}_t\}_{t=1}^T$, but we also have a set of equality and inequality constraints on the state. To make the problem tractable, we replace the state constraints with constraints of the state estimate, which we choose to be the MAP estimate. We aim at computing
\begin{equation} \label{eq:general_function}
\begin{split}
 &\min_{\mathbf{x}_{1:T}}
- \log p(\mathbf{x}_1)
-  \sum_{t=1}^T \log p(\mathbf{y}_{t} \mid \mathbf{x}_t)
- \sum_{t=2}^T  \log p(\mathbf{x}_{t} \mid \mathbf{x}_{t-1})  \\
&{\mathrm{s.t.}}\
\mathbf{e}_t({\mathbf{x}}_t)  = \mathbf{0}, \quad
 \mathbf{c}_t({\mathbf{x}}_t) \leq \mathbf{0}, \quad t = 1,\ldots,T,
\end{split}
\end{equation}
where $\mathbf{e}_t: \mathbb{R}^{N_x} \to \mathbb{R}^{N_e}$ and $\mathbf{c}_t: \mathbb{R}^{N_x} \to \mathbb{R}^{N_c}$ are constraint functions. For the model \eqref{eq:gaussian_model}, we obtain a nonlinear quadratic minimization problem.
In particular, when the functions $\mathbf{h}_t$, $\mathbf{a}_t$ $\mathbf{e}_t$, and $\mathbf{c}_t$ are affine, the problem in \eqref{eq:general_function} is a quadratic equality/inequality constrained optimization problem which can be solved in closed form.

\subsection{Overview of Variable Splitting}
\label{sec:various_methods}

Consider a minimization optimization problem with equality and inequality constraints:
\begin{equation}\label{eq:vs_function}
\begin{split}
\begin{aligned}
\min_{\mathbf{x}} \, \theta(\mathbf{x}),  \quad
\mathrm{s.t.}\, \mathbf{e}({\mathbf{x}}) = \mathbf{0}, \, \mathbf{c}({\mathbf{x}}) \leq \mathbf{0},
\end{aligned}
\end{split}
\end{equation}
where $\theta({\mathbf{x}})$ is the cost function, and $\mathbf{e}({\mathbf{x}})$, $\mathbf{c}({\mathbf{x}})$ are constraint functions. Our approach to solving problems of the form \eqref{eq:vs_function} proceeds by introducing auxiliary constrained variables to separate the components in the cost function, which is called \textit{variable splitting} \cite{Splitting2017book}. More specifically, we introduce an additional variable $\mathbf{v}$ and a barrier function ${I}(\mathbf{v})$ to replace the original inequality constraint, which leads to an equality constrained optimization problem
\begin{equation}\label{eq:general_constrained_function}
\begin{split}
\min_{\mathbf{x}}\, \theta(\mathbf{x}), \quad
\mathrm{s.t.}\  &\mathbf{e}(\mathbf{x}) = \mathbf{0}, \, \mathbf{c}(\mathbf{x}) + \mathbf{v} = \mathbf{0}, \\
 & I(\mathbf{v})= \begin{cases}
0, &   \mathbf{v} \geq 0, \\
\infty, &     \text{otherwise}.
\end{cases}
\end{split}
\end{equation}
We then define the so-called augmented Lagrangian function by introducing Lagrangian multipliers and penalty parameters. The process alternates among the updates of the split variables. For solving~\eqref{eq:general_constrained_function}, ADMM, PRS, and SBM variable splitting optimization methods are discussed.

ADMM was developed in part to blend the decomposability of dual ascent with the superior convergence properties of the method of multipliers \cite{Boyd2011admm}. Given $\mathbf{x}^{(0)}$, $\mathbf{v}^{(0)}$, $\bm{\eta}^{(0)}$, and $ \bm{\zeta}^{(0)}$, ADMM solves the constrained optimization problem~\eqref{eq:general_constrained_function} via the iterative steps:
\begin{subequations}
	\label{eq:general-admm}
	\begin{align}
	\label{eq:x-primal-admm}
	\mathbf{x}^{(k+1)} &= \mathop{\arg\min}_{\mathbf{x}} \theta(\mathbf{x})
	+ \frac{\rho_1}{2} \left\|\mathbf{c}(\mathbf{x}) +  \mathbf{v}^{(k)} + \bm{\eta}^{(k)}/\rho_1 \right\|^2  \notag\\
	& \quad \quad \quad + \frac{\rho_2}{2}  \left\|  \mathbf{e}(\mathbf{x}) + \bm{\zeta}^{(k)}/\rho_2 \right\|^2  , \\
		\label{eq:v-admm}
	\mathbf{v}^{(k+1)} &= {\max} \left( \mathbf{0}, \, - \mathbf{c}(\mathbf{x}^{(k+1)}) - \bm{\eta}^{(k)} /\rho_1 \right), \\
		\label{eq:eta-admm}
	\bm{\eta}^{(k+1)} & = \bm{\eta}^{(k)} +   \rho_1 \left(\mathbf{c}(\mathbf{x}^{(k+1)}) + \mathbf{v}^{(k+1)} \right), \\
		\label{eq:zeta-admm}
	\bm{\zeta}^{(k+1)} &= \bm{\zeta}^{(k)} +  \rho_2 \, \mathbf{e}(\mathbf{x}^{(k+1)}) ,
	\end{align}
\end{subequations}
where $\bm{\eta}$, $\bm{\zeta}$ are Lagrange multipliers associated with the constraints, and $\rho_1,\rho_2 > 0$ are penalty parameters.

The PRS method \cite{Peaceman1955,He2014peaceman} is similar to ADMM, but computes the primal variable once and updates the Lagrange multiplier twice, by updating the intermediate multipliers $\bm{\eta}^{(k+\frac{1}{2})}$ and $\bm{\zeta}^{(k+\frac{1}{2})}$). The iteration can be written as
\begin{subequations}
	\label{eq:general-PRS}
	\begin{align}
	\label{eq:x-primal-PRS}
	\mathbf{x}^{(k+1)} &= \mathop{\arg\min}_{\mathbf{x}} \theta(\mathbf{x})
	+ \frac{\rho_1}{2} \left\|\mathbf{c}(\mathbf{x}) +  \mathbf{v}^{(k)} + \bm{\eta}^{(k)}/\rho_1  \right\|^2  \notag\\
	& \quad \quad \quad + \frac{\rho_2}{2}  \left\|  \mathbf{e}(\mathbf{x}) + \bm{\zeta}^{(k)}/\rho_2 \right\|^2  , \\
	\bm{\eta}^{(k+\frac{1}{2})} &= \bm{\eta}^{(k)} + \alpha_1 \rho_1 \left( \mathbf{c}(\mathbf{x}^{(k+1)}) + \mathbf{v}^{(k)} \right), \\
     \bm{\zeta}^{(k+\frac{1}{2})} &= \bm{\zeta}^{(k)} +  \alpha_2 \rho_2\,\mathbf{e}(\mathbf{x}^{(k+1)}),  \\
    \mathbf{v}^{(k+1)} &= {\max} \left( \mathbf{0}, \,  -\mathbf{c}(\mathbf{x}^{(k+1)}) - \bm{\eta}^{(k)}  /\rho_1 \right), \\
	\bm{\eta}^{(k+1)} &= \bm{\eta}^{(k+\frac{1}{2})} + \alpha_1  \rho_1 \left( \mathbf{c}(\mathbf{x}^{(k+1)}) + \mathbf{v}^{(k+1)} \right), \\
	\bm{\zeta}^{(k+1)} &= \bm{\zeta}^{(k+\frac{1}{2})} +  \alpha_2 \rho_2\,\mathbf{e}(\mathbf{x}^{(k+1)}),
	\end{align}
\end{subequations}
with the parameters $\alpha_1,\alpha_2\in(0,1)$.

In SBM~\cite{Goldstein2009Split}, we have updates for $k = 1, 2\ldots$, as follows:
\begin{subequations}
\label{eq:general-SBM}
\begin{align}
\label{eq:x-primal-SBM}
	\mathbf{x}^{(k+1)} &= \mathop{\arg\min}_{\mathbf{x}} \theta(\mathbf{x}) + \frac{\rho_1}{2} \left\|\mathbf{c}(\mathbf{x}) + \mathbf{v}^{(k)} + \bm{\eta}^{(k)} \right\|^2  \notag\\
& \quad \quad \quad + \frac{\rho_2}{2}  \left\|  \mathbf{e}(\mathbf{x}) + \bm{\zeta}^{(k)} \right\|^2  , \\
 \mathbf{v}^{(k+1)} &= {\max} \left( \mathbf{0}, \,  - \mathbf{c}(\mathbf{x}^{(k+1)}) -\bm{\eta}^{(k)} \right),
\end{align}
\end{subequations}
for $M$ times, and update the extra variable by
\begin{subequations}\label{eq:sbm_eta}
\begin{align}
	\bm{\eta}^{(k+1)} & = \bm{\eta}^{(k)} +    \mathbf{c}(\mathbf{x}^{(k+1)}) + \mathbf{v}^{(k+1)} , \\
\bm{\zeta}^{(k+1)} &= \bm{\zeta}^{(k)} +  \mathbf{e}(\mathbf{x}^{(k+1)}).
\end{align}
\end{subequations}
In particular, SBM is equivalent to the scaled ADMM \cite{Boyd2011admm} when the inner iteration number $M = 1$.

All these methods discussed above, ADMM, PRS, and SBM, can solve optimization problems with equality and inequality constraints. Although the methods are slightly different, their updates of the primal variable $\mathbf{x}$ are similar. However, when the dynamic system is described in terms of a partially observed Markov process, the minimization problems typically become very high-dimensional.

\section{The Proposed Approach}
\label{sec:proposed}

In this section, we first employ the generalized constrained smoother-based variable splitting framework. \minew{For linear and nonlinear dynamic systems, we present} the constrained Kalman smoother and the constrained iterated extended Kalman smoother, respectively.

\subsection{The General Framework}
\label{sec:general_framework}
The methods we employ here rely on the variable splitting methods. Let $\theta(\mathbf{x}_{1:T})$ be a family of cost functions
\begin{equation}\label{eq:theta_function}
\begin{split}
 \theta(\mathbf{x}_{1:T}) &=  - \sum_{t=1}^T \log p(\mathbf{y}_{t} \mid \mathbf{x}_t)  \\
 &\qquad - \sum_{t=2}^T \log p(\mathbf{x}_{t} \mid \mathbf{x}_{t-1})  - \log p(\mathbf{x}_1).
\end{split}
\end{equation}
For \minew{Gaussian-driven nonlinear systems} in \eqref{eq:gaussian_model}, the function $ \theta(\mathbf{x}_{1:T})$ has the form \minew{(see also \cite{Bell1994smoother})}
\begin{equation}\label{eq:theta_function_gaussian}
\begin{split}
&\theta(\mathbf{x}_{1:T}) =  \frac{1}{2} \sum_{t=1}^{T}  \left\| \mathbf{y}_t - \mathbf{h}_t(\mathbf{x}_t)  \right\|_{\mathbf{R}_t^{-1}}^2\\
&\quad + \frac{1}{2}\sum_{t=2}^{{T}} \left\|\mathbf{x}_{t}-\mathbf{a}_t(\mathbf{x}_{t-1}) \right\|_{ \mathbf{Q}_t^{-1} }^2
+\frac{1}{2} \| \mathbf{x}_1  -   \mathbf{m}_1 \|_{ \mathbf{P}_1^{-1} }^2, 
\end{split}
\end{equation}
\minew{where $\|\mathbf{x}\|_{\mathbf{R}}^2 = \mathbf{x}^\T \mathbf{R} \mathbf{x}$.}
The equality/inequality constrained optimization problem corresponding to \eqref{eq:general_constrained_function} is now formed as
\begin{equation}\label{eq:constrained_function}
\begin{split}
\min_{\mathbf{x}_{1:T}}\,  &\theta(\mathbf{x}_{1:T})  \\
\mathrm{s.t.}\  &\mathbf{e}_t(\mathbf{x}_t) = \mathbf{0}, \quad \mathbf{c}_t({\mathbf{x}_t}) + \mathbf{v}_t = \mathbf{0}, \quad t = 1,\ldots,T, \\
& I(\mathbf{v}_t) = \begin{cases}
0, &  \mathbf{v}_t \geq 0, \\
\infty, &     \text{otherwise}.
\end{cases}
\end{split}
\end{equation}
As discussed in Section \ref{sec:various_methods}, the unified steps of solving~\eqref{eq:constrained_function} are to alternate minimization with respect to $\mathbf{x}_{1:T}$, $\mathbf{v}_{1:T}$, $\bm{\eta}_{1:T}$, and $\bm{\zeta}_{1:T}$, which depend on the specific method we choose to use. All the $\mathbf{x}_{1:T}$ subproblems in ADMM, PRS, and SBM are of nonlinear least-square type. The $\mathbf{v}_{t}$, $\bm{\eta}_{t}$, and $\bm{\zeta}_{t}$ subproblems in \eqref{eq:general-admm}, \eqref{eq:general-PRS}, and \eqref{eq:general-SBM} remain the same as in batch setting, except that we do the updates for each $t = 1,\ldots,T$ separately.

When $T$ is extremely large, the computation in the $\mathbf{x}_{1:T}$ subproblems have high computational costs. Therefore, in the following, we explicitly leverage the Markov structure of the problems which enables the use of computationally efficient Bayesian recursive smoothers, aiming at computing the MAP estimate of a constrained partially observed Markov process. 

\subsection{Constrained Kalman Smoother (CKS) for Affine Systems}
\label{sec:affine_constraint}
In this section, we present the method for solving the constrained state-estimation problem in affine systems. This solution which is based on KS will later be used in the nonlinear filters and smoothers. Let us now assume that the model and constraint functions are affine
\begin{equation}\label{eq:model}
\begin{aligned}
\mathbf{a}_t(\mathbf{x}_{t-1}) &= \mathbf{A}_t \, \mathbf{x}_{t-1} + \mathbf{b}_t, \quad
&\mathbf{h}_t(\mathbf{x}_t) &=\mathbf{H}_t \, \mathbf{x}_t + \mathbf{g}_t, \\
\mathbf{c}_t (\mathbf{x}_t) &= \mathbf{C}_t \,{\mathbf{x}}_t + \mathbf{d}_t,\quad
&\mathbf{e}_t (\mathbf{x}_t) &= \mathbf{E}_t \, {\mathbf{x}}_t  + \mathbf{f}_t,
\end{aligned}
\end{equation}
where $\mathbf{A}_t$ and $\mathbf{H}_t$ are the transition and measurement matrices, $\mathbf{E}_t$, $\mathbf{C}_t$ are constraint matrices, and $ \mathbf{b}_t$, $ \mathbf{g}_t$, $\mathbf{f}_t$, $\mathbf{d}_t$ are given vectors.

If we apply, for example, the ADMM method to the optimization problem in \eqref{eq:constrained_function}, then in the affine case the $\mathbf{x}_{1:T}$ subproblem becomes
\begin{equation}\label{eq:x_kf_linear}
\begin{split}
&\mathbf{x}_{1:T}^\star =  \mathop{\arg\min}_{\mathbf{x}_{1:T}}
 \frac{1}{2}  \sum_{t=1}^{T}  \left\| \mathbf{y}_t  - \mathbf{H}_t \, \mathbf{x}_t   - \mathbf{g}_t  \right\|_{\mathbf{R}_t^{-1}}^2  \\
& + \frac{1}{2}\sum_{t=2}^{{T}} \|\mathbf{x}_{t}   -   \mathbf{A}_t \, \mathbf{x}_{t-1}   -  \mathbf{b}_t \|_{ \mathbf{Q}_t^{-1} }^2
+ \frac{\rho_2}{2} \sum_{t=1}^{{T}}   \left\|  \mathbf{E}_t \mathbf{x}_t + \mathbf{f}_t + \frac{\bm{\zeta}_t}{\rho_2}  \right\|^2
\\
& + \frac{\rho_1}{2} \sum_{t=1}^{{T}}
\left\|\mathbf{C}_t \mathbf{x}_t + \mathbf{d}_t  + \mathbf{v}_t + \frac{\bm{\eta}_t}{\rho_1} \right\|^2  + \frac{1}{2}\| \mathbf{x}_1   -  \mathbf{m}_1 \|_{ \mathbf{P}_1^{-1} }^2.
\end{split}
\end{equation}
Similarly to \cite{Gao2019ieks}, this minimization problem corresponds to the MAP state estimate of an affine state-space model, and this estimate can be computed using KS. We now define two artificial measurement noises $\bm{\sigma}_t$ and $\bm{\delta}_t$ with covariances $\mathbf{\Sigma}_t$, $\mathbf{\Delta}_t$, and two pseudo-measurements $\mathbf{z}_t$, $\mathbf{w}_t$:
\begin{equation}\label{eq:setting}
\begin{aligned}
\mathbf{\Sigma}_t &= \mathbf{I}/\rho_1, &\mathbf{z}_t& = -\mathbf{v}_t  - {\bm{\eta}_t}/\rho_1,\\
\mathbf{\Delta}_t &= \mathbf{I}/\rho_2, &\mathbf{w}_t& = - \bm{\zeta}_t/\rho_2,
\end{aligned}
\end{equation}
where $\mathbf{I}$ is an identity matrix. The solution to \eqref{eq:x_kf_linear} can then be computed by running KS on the constrained state-space model
\begin{equation} \label{eq:linsubprob}
\begin{aligned}
\mathbf{x}_t &= \mathbf{A}_t\mathbf{x}_{t-1} + \mathbf{b}_t + \mathbf{q}_t,
&\mathbf{y}_t &= \mathbf{H}_t \mathbf{x}_t + \mathbf{g}_t + \mathbf{r}_t,  \\
\mathbf{z}_t &= \mathbf{C}_t \mathbf{x}_t + \mathbf{d}_t + \bm{\sigma}_t,
&\mathbf{w}_t &= \mathbf{E}_t \mathbf{x}_t + \mathbf{f}_t + \bm{\delta}_t.
\end{aligned}
\end{equation}

\subsection{Constrained Iterated Extended Kalman Smoother (CIEKS)}
\label{sec:nonlinear_ieks}
\minew{
When the state-space model or constraint functions are nonlinear, we can replace the Kalman smoother (KS) with the iterated extended Kalman smoother (IEKS) \cite{Bell1994smoother}. As a nonlinear extension of KS, IEKS works by iteratively linearizing the nonlinear functions around a previous estimate.} At each iteration $i$, we form the affine approximations for the nonlinear functions $\mathbf{a}_t$, $\mathbf{h}_t$, $\mathbf{c}_t$, and $\mathbf{e}_t$, given by
\begin{equation}
\label{eq:eks_nonliear_appro}
\begin{aligned}
\mathbf{a}_t(\mathbf{x}_{t-1}) &\approx \mathbf{a}_t(\mathbf{x}_{t-1}^{(i)}) + \mathbf{J}_{a_t}(\mathbf{x}_{t-1}^{(i)}) (\mathbf{x}_{t-1} - \mathbf{x}_{t-1}^{(i)}), \\
\mathbf{h}_t(\mathbf{x}_t) &\approx \mathbf{h}_t(\mathbf{x}_t^{(i)}) + \mathbf{J}_{h_t}(\mathbf{x}_t^{(i)}) (\mathbf{x}_t - \mathbf{x}_t^{(i)}), \\
\mathbf{c}_t(\mathbf{x}_t) &\approx \mathbf{c}_t(\mathbf{x}_{t}^{(i)}) + \mathbf{J}_{c_t}(\mathbf{x}_{t}^{(i)}) (\mathbf{x}_{t} - \mathbf{x}_{t}^{(i)}), \\
\mathbf{e}_t(\mathbf{x}_t) &\approx \mathbf{e}_t(\mathbf{x}_{t}^{(i)}) + \mathbf{J}_{e_t}(\mathbf{x}_{t}^{(i)}) (\mathbf{x}_{t} - \mathbf{x}_{t}^{(i)}),
\end{aligned}
\end{equation}
where $\mathbf{J}_\phi$ denotes the Jacobian of $\phi(\mathbf{x})$ and $\mathbf{x}_t^{(i)}$ is the current estimate. When we use the affine approximations on the $\mathbf{x}_{1:T}$ subproblem, for example~$\eqref{eq:x-primal-admm}$, the state-space model \eqref{eq:linsubprob} can be obtained if we explicitly write
\begin{equation} \label{eq:eks_setting}
\begin{aligned}
\mathbf{A}_{t} &=  \mathbf{J}_{a_t}(\mathbf{x}_{t-1}^{(i)}),
&\mathbf{b}_t& =  \mathbf{a}_t(\mathbf{x}_{t-1}^{(i)}) - \mathbf{J}_{a_t}(\mathbf{x}_{t-1}^{(i)}) \,\mathbf{x}_{t-1}^{(i)}, \\
\mathbf{H}_{t} &=\mathbf{J}_{h_t}(\mathbf{x}_t^{(i)}),
&\mathbf{g}_t& =  \mathbf{h}_t(\mathbf{x}_t^{(i)}) - \mathbf{J}_{h_t}(\mathbf{x}_t^{(i)})\, \mathbf{x}_t^{(i)}, \\
\mathbf{C}_{t} &=\mathbf{J}_{c_t}(\mathbf{x}_t^{(i)}),
&\mathbf{d}_t& = \mathbf{c}_t(\mathbf{x}_t^{(i)}) - \mathbf{J}_{c_t}(\mathbf{x}_t^{(i)}) \,\mathbf{x}_t^{(i)},\\
\mathbf{E}_{t} &=\mathbf{J}_{e_t}(\mathbf{x}_t^{(i)}),
&\mathbf{f}_t& = \mathbf{e}_t(\mathbf{x}_t^{(i)}) - \mathbf{J}_{e_t}(\mathbf{x}_t^{(i)}) \, \mathbf{x}_t^{(i)}.
\end{aligned}
\end{equation}
As discussed in Section~\ref{sec:affine_constraint}, we can obtain the solution by running KS. Hence, the $\mathbf{x}_{1:T}$ subproblem is solved by iterating these steps. CIEKS is equivalent to constrained Gauss--Newton (cf.\ \cite{Bell1994smoother}), but the smoother here uses the Markov structure of the problem, which leads to a lower computational cost.

\subsection{Convergence Results}
\label{sec:theoretical}
In this section, we present theoretical results for the proposed methods which are derived from the combination of CKS/CIKES and ADMM.

In the affine case, we have the following theorem.
\begin{theorem}[Convergence of CKS-ADMM]
	\label{theorem_admm}
	Let $\mathbf{Q}_t$ and $\mathbf{P}_1$ be positive definite matrices. The sequence $\{\mathbf{x}_{1:T}^{(k)}, \mathbf{v}_{1:T}^{(k)}, \bm{\eta}_{1:T}^{(k)}, \bm{\zeta}_{1:T}^{(k)}\}$ generated by CKS-ADMM globally converges to a stationary point $(\mathbf{x}_{1:T}^\star, \mathbf{v}_{1:T}^\star, \bm{\eta}_{1:T}^\star,\bm{\zeta}_{1:T}^\star)$.
\end{theorem}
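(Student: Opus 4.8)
The plan is to treat the CKS-ADMM recursion abstractly as a two-block ADMM applied to $\min_{\mathbf{x}_{1:T},\,\mathbf{v}_{1:T}}\ \theta(\mathbf{x}_{1:T})+\sum_t I(\mathbf{v}_t)$ under the couplings $\mathbf{c}_t(\mathbf{x}_t)+\mathbf{v}_t=\mathbf{0}$ and $\mathbf{e}_t(\mathbf{x}_t)=\mathbf{0}$, and to analyze the augmented Lagrangian
\begin{equation*}
\begin{aligned}
\mathcal{L} &= \theta(\mathbf{x}_{1:T}) + \sum_{t=1}^{T}\Big[ I(\mathbf{v}_t) + \bm{\eta}_t^{\T}\big(\mathbf{c}_t(\mathbf{x}_t)+\mathbf{v}_t\big) \\
&\quad + \tfrac{\rho_1}{2}\|\mathbf{c}_t(\mathbf{x}_t)+\mathbf{v}_t\|^2 + \bm{\zeta}_t^{\T}\mathbf{e}_t(\mathbf{x}_t) + \tfrac{\rho_2}{2}\|\mathbf{e}_t(\mathbf{x}_t)\|^2\Big]
\end{aligned}
\end{equation*}
as a Lyapunov function, rather than appealing to convexity or to uniqueness of a minimizer. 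I would establish convergence to a stationary (KKT) point through a sufficient-decrease / lower-boundedness / subsequence-limit argument, and only at the end promote subsequential convergence to convergence of the whole sequence. This route directly targets the ``stationary point'' wording, accommodates the general (possibly nonconvex/nonlinear) reading, and contains the affine-convex instance as a special case.

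First I would record the per-step optimality relations implied by the updates: the $\mathbf{x}_{1:T}$-step \eqref{eq:x-primal-admm}, carried out by the CKS, gives $\nabla_{\mathbf{x}}\mathcal{L}(\mathbf{x}^{(k+1)},\mathbf{v}^{(k)},\bm{\eta}^{(k)},\bm{\zeta}^{(k)})=\mathbf{0}$; the $\mathbf{v}_t$-step \eqref{eq:v-admm} combined with \eqref{eq:eta-admm} yields the inclusion $-\bm{\eta}_t^{(k+1)}\in\partial I(\mathbf{v}_t^{(k+1)})$; and \eqref{eq:eta-admm}--\eqref{eq:zeta-admm} fix the dual increments. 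I would then prove a sufficient-decrease lemma. The key structural fact is that the $\mathbf{x}$-subproblem is strongly convex -- this is exactly where $\mathbf{Q}_t,\mathbf{P}_1\succ\mathbf{0}$ enters, since the block-tridiagonal Hessian of $\theta$ is then positive definite -- so the $\mathbf{x}$-step decreases $\mathcal{L}$ by at least $\tfrac{c_1}{2}\|\mathbf{x}^{(k+1)}-\mathbf{x}^{(k)}\|^2$, the exact $\mathbf{v}$-minimization is nonincreasing, and the dual steps increase $\mathcal{L}$ by an amount controllable by the primal progress. Combining these gives $\mathcal{L}^{(k)}-\mathcal{L}^{(k+1)}\ge c\big(\|\mathbf{x}^{(k+1)}-\mathbf{x}^{(k)}\|^2+\|\mathbf{v}^{(k+1)}-\mathbf{v}^{(k)}\|^2\big)$ for $\rho_1,\rho_2$ chosen large enough.

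Next I would show $\mathcal{L}$ is bounded below along the trajectory, using coercivity of the strongly convex quadratic $\theta$ with the multiplier terms reabsorbed through the $\mathbf{x}$- and $\mathbf{v}$-optimality relations; monotonicity together with this lower bound forces $\{\mathcal{L}^{(k)}\}$ to converge and, by telescoping, makes the successive differences and hence the primal and dual residuals vanish. Coercivity also renders the iterate sequence bounded, so a convergent subsequence exists; passing to the limit in the optimality relations and in \eqref{eq:eta-admm}--\eqref{eq:zeta-admm}, and using closedness of $\partial I$, shows any limit point $(\mathbf{x}^\star,\mathbf{v}^\star,\bm{\eta}^\star,\bm{\zeta}^\star)$ is feasible and stationary. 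To obtain global (whole-sequence) convergence I would argue that in the affine case strong convexity of $\theta$ makes the stationary point unique, so the bounded sequence converges to it; in the general setting the same conclusion follows from the Kurdyka--\L{}ojasiewicz property of $\mathcal{L}$, which holds because $\theta$ is quadratic and each $I(\mathbf{v}_t)$ is the indicator of a polyhedral set, making $\mathcal{L}$ semialgebraic.

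The main obstacle I anticipate is the dual-control step inside the sufficient-decrease lemma, namely bounding $\|\bm{\eta}^{(k+1)}-\bm{\eta}^{(k)}\|^2$ and $\|\bm{\zeta}^{(k+1)}-\bm{\zeta}^{(k)}\|^2$ by the primal increments. For the $\bm{\eta}$ block this is eased by the identity coefficient of $\mathbf{v}$ in $\mathbf{c}_t(\mathbf{x}_t)+\mathbf{v}_t$, but for the $\bm{\zeta}$ block one must re-express the dual increment through $\nabla\theta$ via the $\mathbf{x}$-optimality condition, which requires a rank/image condition on the stacked operator $[\mathbf{C}_t;\mathbf{E}_t]$ (so the duals stay in its range) together with the Lipschitz constant of $\nabla\theta$. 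Making this bound uniform in $k$, and thereby quantifying how large $\rho_1,\rho_2$ must be, is the technical crux; the remainder is bookkeeping resting on the strong convexity granted by $\mathbf{Q}_t,\mathbf{P}_1\succ\mathbf{0}$.
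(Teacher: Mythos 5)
Your plan has a genuine gap at its load-bearing step, the sufficient-decrease (dual-control) lemma. The nonconvex-ADMM Lyapunov argument you outline requires bounding the dual increments by the primal increments, and in the standard framework this forces the primal block updated \emph{last} before the dual step to have a Lipschitz-differentiable objective. In CKS-ADMM that block is $\mathbf{v}$, whose objective is the indicator $I$. The optimality relation you correctly derive, $-\bm{\eta}_t^{(k+1)}\in\partial I(\mathbf{v}_t^{(k+1)})$, only says that $\bm{\eta}_t^{(k+1)}$ lies in a normal cone with the complementarity $\bm{\eta}_t^{(k+1)\T}\mathbf{v}_t^{(k+1)}=0$; since $\partial I$ is set-valued and nowhere Lipschitz, no inequality of the form $\|\bm{\eta}^{(k+1)}-\bm{\eta}^{(k)}\|\le L\,\|\mathbf{v}^{(k+1)}-\mathbf{v}^{(k)}\|$ follows. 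Your claim that the $\bm{\eta}$-control ``is eased by the identity coefficient'' is precisely where the decrease lemma breaks: eliminating $\mathbf{v}$ from \eqref{eq:v-admm}--\eqref{eq:eta-admm} shows $\bm{\eta}^{(k+1)}=\max\left(\mathbf{0},\,\bm{\eta}^{(k)}+\rho_1\mathbf{c}(\mathbf{x}^{(k+1)})\right)$, a projected multiplier step whose increments are not controlled by primal progress for any finite $\rho_1$. The $\bm{\zeta}$-control has the further defect you half-identified: the $\mathbf{x}$-optimality condition yields $\mathbf{E}^\T\bm{\zeta}^{(k+1)}+\mathbf{C}^\T\hat{\bm{\eta}}^{(k+1)}=-\nabla\theta(\mathbf{x}^{(k+1)})$ where $\hat{\bm{\eta}}^{(k+1)}=\bm{\eta}^{(k)}+\rho_1\left(\mathbf{c}(\mathbf{x}^{(k+1)})+\mathbf{v}^{(k)}\right)\neq\bm{\eta}^{(k+1)}$, so recovering $\bm{\zeta}^{(k+1)}$ requires injectivity of $\mathbf{E}^\T$ (full \emph{row} rank of $\mathbf{E}_t$, not full column rank of the stack) plus a way to disentangle the two dual blocks --- assumptions and machinery nowhere licensed by the theorem, which hypothesizes only $\mathbf{Q}_t,\mathbf{P}_1\succ\mathbf{0}$.

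The deeper issue is a mismatch of scope: the theorem concerns CKS, i.e.\ the affine model \eqref{eq:model}, and in that regime the heavy nonconvex apparatus is unnecessary. With $\mathbf{Q}_t,\mathbf{P}_1\succ\mathbf{0}$ (and $\mathbf{R}_t\succ\mathbf{0}$ by the model assumptions), the batch cost $\theta(\mathbf{x}_{1:T})$ is a strongly convex quadratic --- your own observation that the block-tridiagonal Hessian is positive definite --- and $\mathbf{c}_t$, $\mathbf{e}_t$ are affine, so the split problem is a two-block convex program with linear coupling. This is exactly the paper's proof: convexity of $\theta$ plus the standard ADMM convergence theory of \cite{Boyd2011admm}. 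That classical theory uses a Fej\'er-type Lyapunov function (a weighted distance to a saddle point), not descent of the augmented Lagrangian, which is how it sidesteps the dual-control obstruction entirely; strong convexity of the $\mathbf{x}$-block then upgrades the conclusion to whole-sequence primal convergence, making your KL argument superfluous. If you intend your analysis for the genuinely nonlinear setting, that is the province of Theorem~\ref{theorem:CIEKS-ADMM} (prox-regularity, full-rank Jacobians), not this statement.
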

\begin{proof}
	When the conditions $\mathbf{Q}_t \succeq \mathbf{0}$, $ \mathbf{P}_1 \succeq \mathbf{0}$ are satisfied, substituting into the batch form, the cost function $\mathbf{\theta}(\mathbf{x}_{1:T})$ will be convex. Hence, the convergence results can be
	obtained from the standard ADMM convergence proof \cite{Boyd2011admm}.
\end{proof}

On the other hand, when all the functions and constraints are nonlinear, $\mathbf{\theta}(\mathbf{x}_{1:T})$ may not be convex, for this case we have the following local convergence theorem.

\begin{theorem}[Convergence of CIEKS-ADMM]
	\label{theorem:CIEKS-ADMM}
	Let $\theta(\mathbf{x})$ be prox-regular \cite{Poliquin1996Prox} with the constant $M_{\theta}$ and the Jacobian $\mathbf{J}_c$, $\mathbf{J}_e$ have full-column rank. Then there exists $\rho_1,\, \rho_2 > 0$ such that
	 the sequence $\left\{\mathbf{x}_{1:T}^{(k)}, \mathbf{v}_{1:T}^{(k)},\bm{\eta}_{1:T}^{(k)}, \bm{\zeta}_{1:T}^{(k)} \right\}$ generated by CIEKS-ADMM converges to a local minimum $(\mathbf{x}_{1:T}^\star, \mathbf{v}_{1:T}^\star, \bm{\eta}_{1:T}^\star,\bm{\zeta}_{1:T}^\star)$.
\end{theorem}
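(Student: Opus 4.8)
The plan is to establish local convergence of CIEKS-ADMM by reducing it to a known convergence theorem for nonconvex ADMM under prox-regularity, exploiting the fact that the $\mathbf{x}_{1:T}$-subproblem, as explained in Sections~\ref{sec:affine_constraint}--\ref{sec:nonlinear_ieks}, is solved exactly (up to the Gauss--Newton/IEKS linearization, which returns the exact minimizer of the linearized least-squares problem via KS). First I would write out the augmented Lagrangian in batch form,
\begin{equation*}
\mathcal{L}_{\rho_1,\rho_2}(\mathbf{x},\mathbf{v},\bm{\eta},\bm{\zeta})
= \theta(\mathbf{x}) + I(\mathbf{v})
+ \tfrac{\rho_1}{2}\bigl\|\mathbf{c}(\mathbf{x})+\mathbf{v}+\bm{\eta}/\rho_1\bigr\|^2
+ \tfrac{\rho_2}{2}\bigl\|\mathbf{e}(\mathbf{x})+\bm{\zeta}/\rho_2\bigr\|^2,
\end{equation*}
so that the updates \eqref{eq:x-primal-admm}--\eqref{eq:zeta-admm} are exactly the alternating minimization and dual-ascent steps of ADMM applied to this $\mathcal{L}$. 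The key structural observation is that CIEKS computes a stationary point of the $\mathbf{x}$-subproblem, so the analysis is that of the abstract iteration, not of the smoother internals.

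Next I would verify the hypotheses required by a nonconvex ADMM convergence result (e.g.\ the Lyapunov/sufficient-decrease framework of Wang--Yin--Zeng or Hong--Luo--Razaviyayn). The two hypotheses in the statement map directly onto the two standard conditions: prox-regularity of $\theta$ with constant $M_\theta$ supplies the restricted prox-regularity / lower-boundedness needed to control the nonconvex primal objective, and the full-column-rank assumption on the Jacobians $\mathbf{J}_c$, $\mathbf{J}_e$ guarantees that the stacked constraint map is surjective onto the relevant image, which is precisely what lets one bound the dual increments $\|\bm{\eta}^{(k+1)}-\bm{\eta}^{(k)}\|$ and $\|\bm{\zeta}^{(k+1)}-\bm{\zeta}^{(k)}\|$ by the primal increments. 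I would then show that for $\rho_1,\rho_2$ chosen large enough relative to $M_\theta$ and the smallest singular values of the Jacobians, the augmented Lagrangian is a monotonically nonincreasing and bounded-below Lyapunov function along the iterates, giving a sufficient-decrease inequality $\mathcal{L}^{(k)} - \mathcal{L}^{(k+1)} \ge c\,(\|\mathbf{x}^{(k+1)}-\mathbf{x}^{(k)}\|^2 + \|\mathbf{v}^{(k+1)}-\mathbf{v}^{(k)}\|^2)$ for some $c>0$.

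From the sufficient-decrease inequality I would conclude that successive iterate differences are square-summable, hence tend to zero, and that the sequence is bounded (using coercivity implied by prox-regularity plus the penalty terms); extracting a convergent subsequence and passing to the limit in the optimality conditions then identifies any limit point as a KKT/stationary point of \eqref{eq:constrained_function}, which under the stated regularity is a local minimum. The main obstacle I expect is making the IEKS linearization rigorous within this scheme: the exact nonconvex-ADMM theory assumes the $\mathbf{x}$-subproblem is solved for the \emph{true} nonlinear $\mathbf{c}$, $\mathbf{e}$, $\theta$, whereas CIEKS returns the minimizer of a Gauss--Newton surrogate. I would handle this by appealing to the local equivalence of CIEKS with constrained Gauss--Newton noted after \eqref{eq:eks_setting}, arguing that near a solution the linearization error is higher-order so the surrogate stationary point satisfies the descent inequality up to a controllable remainder; bounding this remainder uniformly, and thereby securing that the penalty thresholds $\rho_1,\rho_2$ can be chosen to absorb it, is the delicate technical point on which the proof hinges.
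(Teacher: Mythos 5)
The paper's own ``proof'' of this theorem is a single sentence deferring everything to \cite[Theorem 2]{Gao2019ieks} ``mutatis mutandis,'' so there is no detailed argument in the paper to compare against. Your outline is the standard nonconvex-ADMM route (augmented-Lagrangian Lyapunov function, sufficient decrease for $\rho_1,\rho_2$ large relative to $M_\theta$ and the smallest singular values of $\mathbf{J}_c,\mathbf{J}_e$, square-summable iterate differences, limit points satisfying stationarity), and this is almost certainly the machinery the cited reference invokes; you also correctly identify what each hypothesis is for. In that sense your proposal is \emph{more} informative than the paper's proof, and you deserve credit for explicitly naming the gap that ``mutatis mutandis'' silently papers over: the abstract theory assumes the $\mathbf{x}$-subproblem is minimized for the true nonlinear $\theta$, $\mathbf{c}$, $\mathbf{e}$, whereas CIEKS returns the minimizer of a Gauss--Newton surrogate, and absorbing that linearization error into the descent inequality is not carried out either by you or by the paper. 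Two caveats keep your proposal from being a complete proof rather than a plan. First, that surrogate-vs-exact issue is flagged but not resolved; as written, your argument establishes convergence only for an idealized variant in which the subproblem is solved exactly. Second, the sufficient-decrease framework delivers convergence to a stationary (KKT) point, not to a local minimum as the theorem asserts; your parenthetical ``which under the stated regularity is a local minimum'' needs its own justification (prox-regularity alone does not rule out saddle points), and this over-claim is inherited from the theorem statement itself. Neither caveat reflects a wrong turn on your part --- both are gaps the paper leaves open as well --- but a referee would ask you to close them.
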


\begin{proof}
The proof is based on our paper \minew{\cite[Theorem 2]{Gao2019ieks}}, mutatis mutandis.
\end{proof}

\section{Experiments}
\label{sec:simulation}
In this section, we evaluate the performance of the proposed constrained smoother-based variable splitting methods, including CIEKS-PRS, CIEKS-SBM, and CIEKS-ADMM, which are examples of those combinations. Consider a four-dimensional ship tracking model~\cite{Bella2009KB}, where the ship velocity $(x_{1,t}$, $x_{3,t})$ and its position $(x_{2,t}$, $x_{4,t})$ are given by
$$
\mathbf{x}_t =
\left(
1,\, t ,\,  -\cos(t) ,\,  1.3- \sin(t)
\right)^\T.
$$
The measurements are captured by two stationary positions, which are located at $(0,0)^\T$ and $(2 \pi, 0)^\T$.
The transition function and the covariance are
\begin{equation}
\begin{split}
\mathbf{a}_t(\mathbf{x}_{t-1}) &=
\left(
x_{1,t-1}, \, x_{2,t-1} + x_{1,t-1} \Delta t, \right. \\
&\qquad \quad  \left. x_{3,t-1}, \,  x_{4,t-1} + x_{3,t-1} \Delta t
\right)^\T,\\
\mathbf{Q}_t &=
\begin{bmatrix}
\Delta t  &  {\Delta t^2 }/{2} & 0 & 0  \\
{\Delta t^2 }/{2}  & {\Delta t^3 }/{3}  & 0 & 0 \\
0  & 0 &  {\Delta t }  & {\Delta t^2 }/{2}  \\
0 &  0  & {\Delta t^2 }/{2}   & {\Delta t^3 }/{3} \nonumber
\end{bmatrix},
\end{split}
\end{equation}
with $T = 100$, $\Delta t = 2\pi/ T$.
The measurement function and the covariance are
$$
\mathbf{h}_t(\mathbf{x}_{t}) =
\begin{pmatrix}
\sqrt{x_{2,t}^2 + x_{4,t}^2 } \\
\sqrt{(x_{2,t} - 2\pi)^2 + x_{4,t}^2 }
\end{pmatrix}, \quad
\mathbf{R}_t=
\begin{pmatrix}
\tau^2 & 0 \\
0&\tau^2
\end{pmatrix},
$$
with $\tau = 0.25$.
We impose the inequality constraint
$
\mathbf{c}_t(\mathbf{x}_{t})  = 1.25 - \sin(x_{2,t} ) - x_{4,t} \le 0
$
into the model. We compare CIEKS-ADMM with the unconstrained estimate and the constrained robust Kalman-Bucy smoother (CKBS) \cite{Bella2009KB}. We set $\rho_1 = 1$ and the maximum number of iterations is $100$. The estimation results are plotted in Fig.~\ref{fig:constraint}. As can be seen, the results with the constraints are much closer to the ground truth than the unconstrained estimate. Due to visual similarity, we only plot the estimate of CIEKS-ADMM here.

Fig.~\ref{fig:compared_result} demonstrates the computational benefits of our CIEKS-based variable splitting methods, compared to the batch PRS, SBM, ADMM methods, and CKBS. The left-hand plot depicts the value of the function $\theta(\mathbf{x}_{1:T})$ versus the iteration number. We observe that all the methods converge in just a few iterations. Our methods, CIEKS-PRS, CIEKS-SBM, and CIEKS-ADMM, have the same convergence rate with the corresponding batch PRS, SBM, and ADMM. They have superior convergence properties over CKBS \cite{Bella2009KB}. Meanwhile, the constrained smoother-based variable splitting methods solve the problem fastest. The right-hand plot in Fig.~\ref{fig:compared_result} compares the running time (sec) of all the methods, confirming that our methods are able to keep this growth very mild, in contrast to the batch methods. The proposed methods take around 15 iterations to run, and are complete in about 0.2 seconds. The benefit of our methods is highlighted by the fact that they can efficiently solve constrained state-estimation problems with extremely large numbers of data points. Table~\ref{table1} shows that with increasing $T$ from $10^3$ to $10^6$, CKBS, PRS, SBM, and ADMM become significantly slower, and the CIEKS-based PRS, SBM, and ADMM yield significant speed improvements. Particularly, the batch methods run out of memory (`--') when $T \geq 10^5$.

\begin{figure}[tbh]
	\centerline{\includegraphics[width=7.5cm,height=5cm]
		{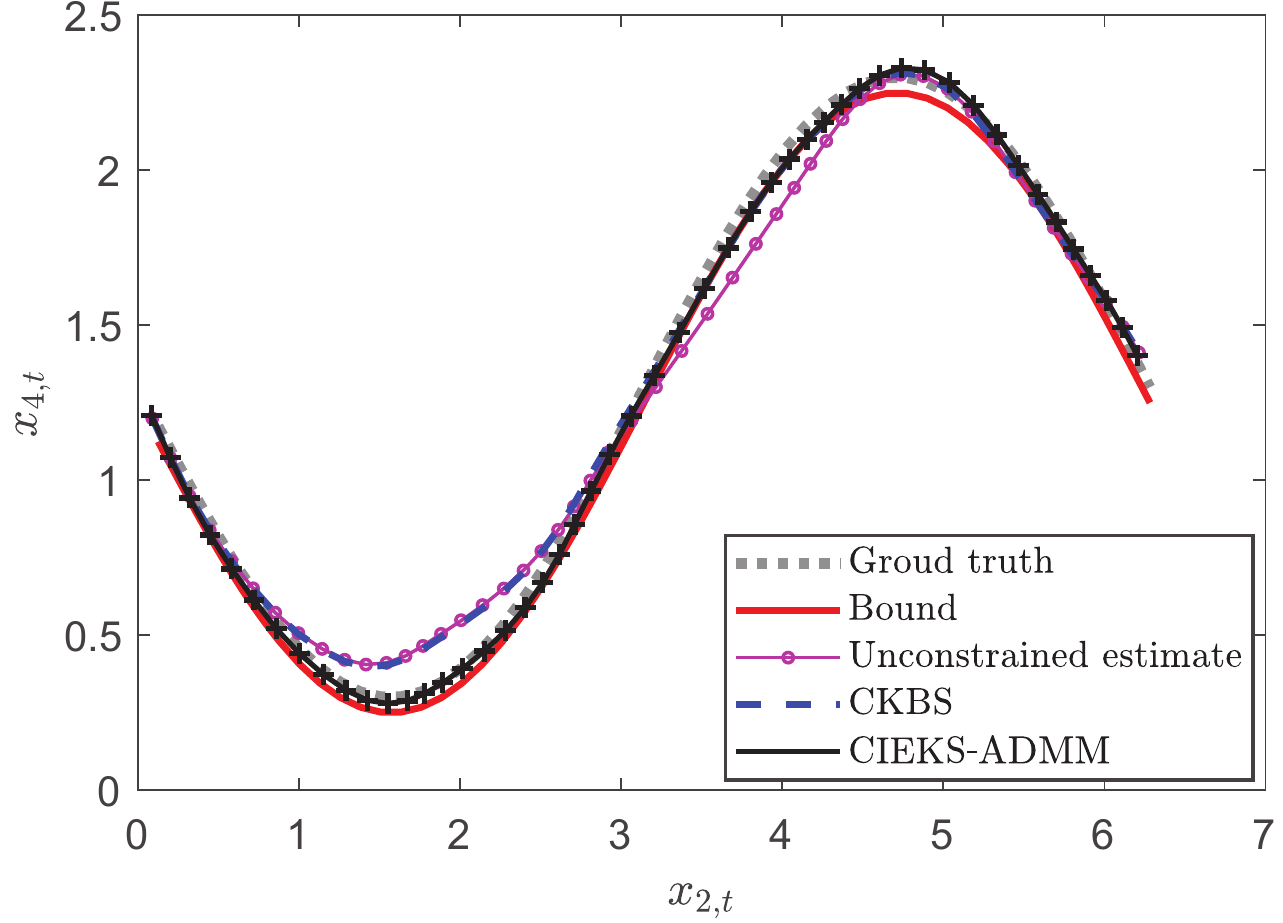}}
	\caption{Ground truth (dashed gray), bound (red line), and the estimates.}
	\label{fig:constraint}
\end{figure}
\begin{figure}[tbh]
		\begin{minipage}{0.49\linewidth}
	\centerline{\includegraphics[width=0.99\columnwidth]{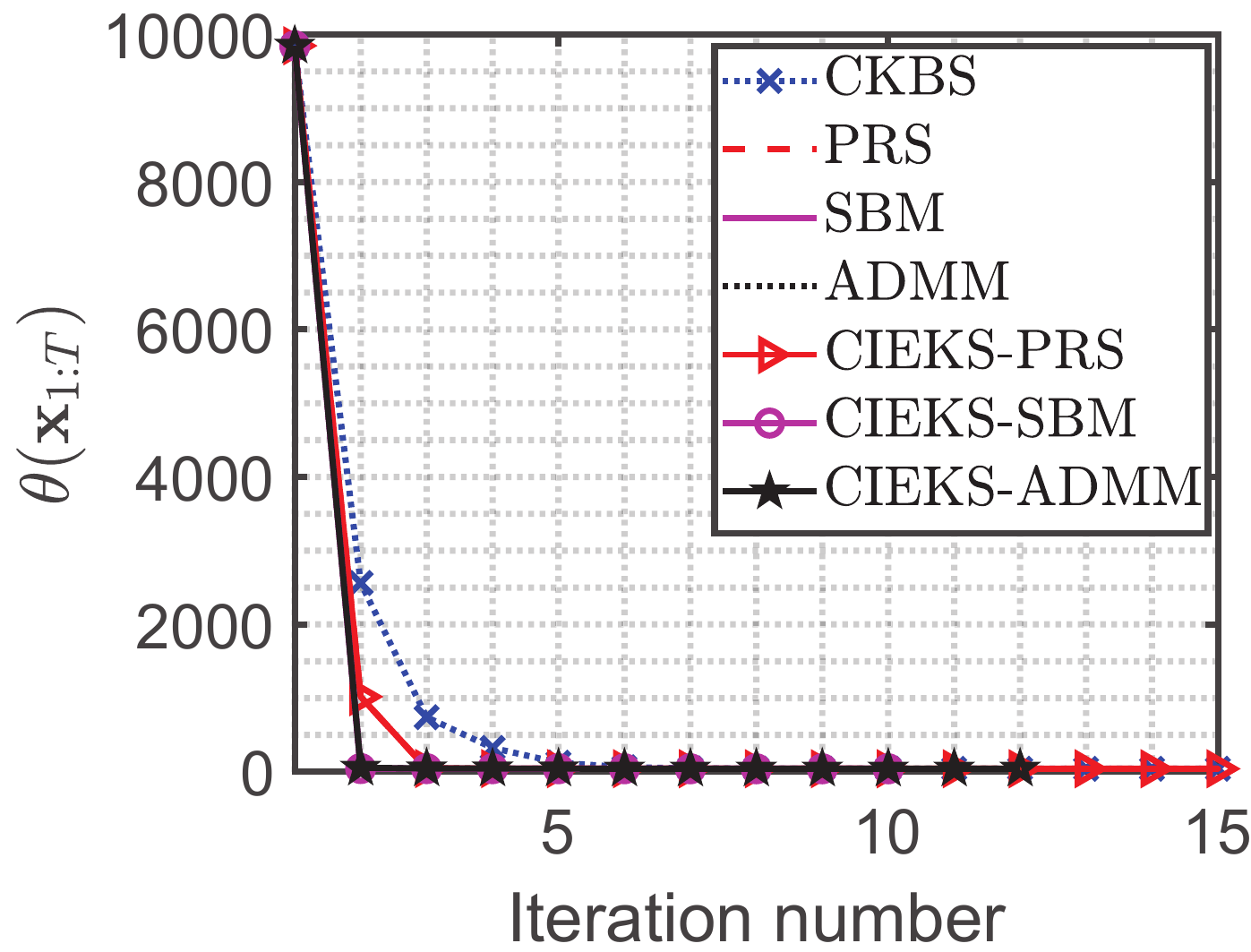}}
	\end{minipage}
	\begin{minipage}{0.49\linewidth}
	\centerline{\includegraphics[width=0.99\columnwidth]{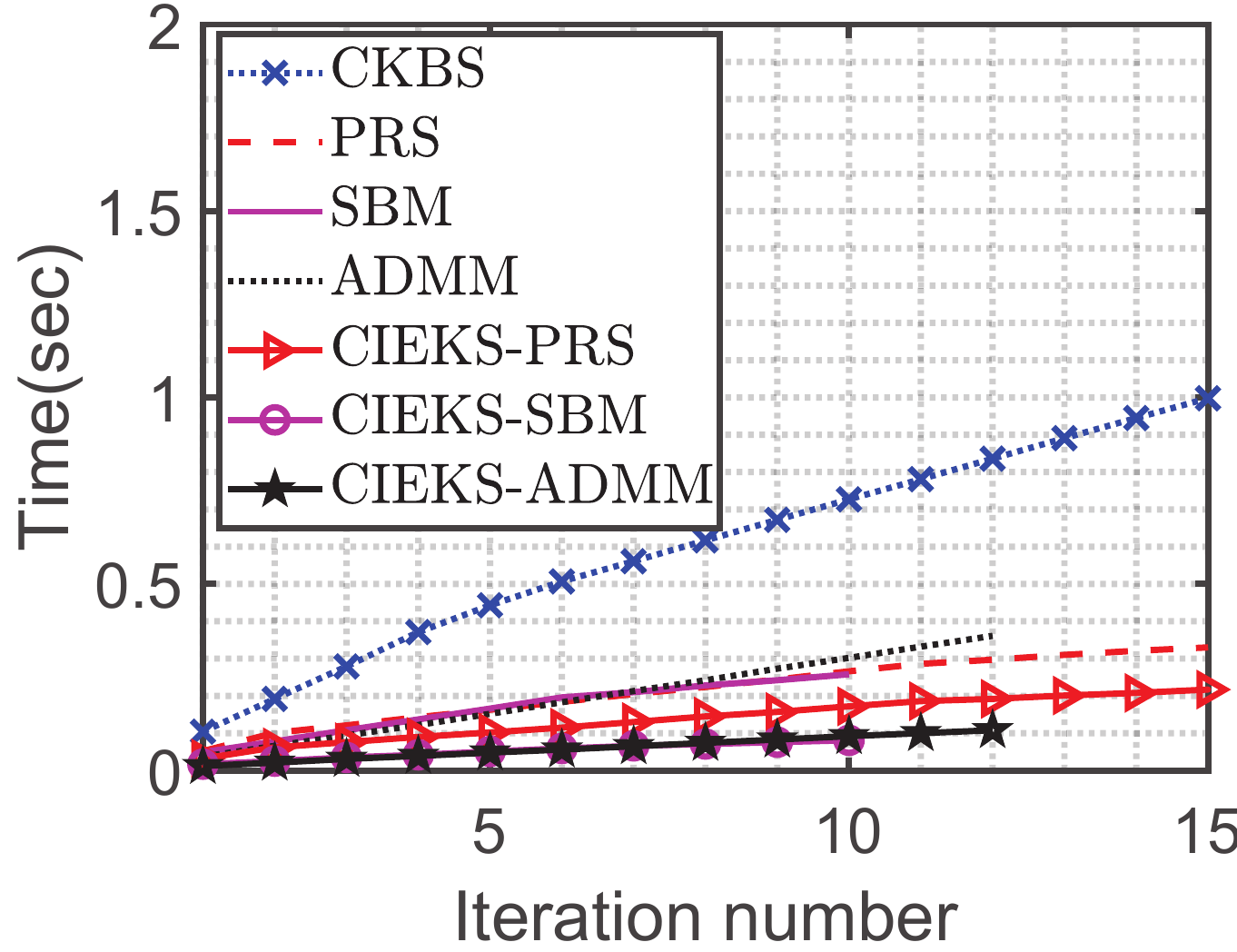}}
	\end{minipage}
	\caption{The cost function values and the running times (sec) of all the methods versus the iteration number. The $\mathbf{x}$-subproblems arising in batch PRS, SBM, and ADMM are solved with Gauss--Newton. }
	\label{fig:compared_result}
\end{figure}
	\setlength\tabcolsep{1.2pt}
	\begin{table}[!tbh]
	\centering
	\caption{Average running time (sec) at different time step counts $T$.}
	\begin{tabular}{|c|ccc|c|ccc|}
		\hline
	    { $T$} 	& PRS     & SBM     & ADMM    & CKBS & CIEKS-PRS   & CIEKS-SBM  & CIEKS-ADMM  \\ \hline
	    $10^3$  & 569.9   & 375.8   & 382.5   &23    & 9.4          & 3.4      & 4.0      \\
		$10^4$  & 3256    & 2193    & 2284    &186   & 89.3         & 26.7     & 31.1      \\
	    $10^5$  & --      & --      & --      &1506  & 847          & 212      & 298        \\
		$10^6$  & --      & --      & --      &--     & 8121           & 2057     & 2913  \\  \hline
	\end{tabular}\label{table1}
\end{table}

\section{Conclusion}
\label{sec:conclusion}
In this paper, we have developed a general framework for constructing constrained smoother-based variable splitting methods, which can be used to solve state-estimation problems with nonlinear equality and inequality constraints. The solution is computationally efficient, because of leveraging the Markov structure of the problem.
The experiments have been used to demonstrate the computational benefits.

\bibliographystyle{IEEEtran}
\bibliography{refs}

\end{document}